\theoremstyle{plain}
\newtheorem{corollary}{Corollary}[section]
\newtheorem{theorem}{Theorem}[section]
\newtheorem{lemma}{Lemma}[section]
\theoremstyle{definition}
\newtheorem{definition}{Definition}[section]
\author[1]{Aidan Johnson}
\author[2]{Andrew E. Vick}
\author[3]{Darren A. Narayan}
\affil[1]{University of Minnesota}
\affil[2]{Lee University}
\affil[3]{Rochester Institute of Technology}
\title{Characterization of Graphs Failed Skew Zero Forcing Number of 1}
\date{July 2022}
\begin{document}

\maketitle

\begin{abstract}
  Given a graph $G$, the zero forcing number of $G$, $Z(G)$, is the smallest cardinality of any set $S$ of vertices on which repeated applications of the forcing rule results in all vertices being in $S$. The forcing rule is: if a vertex $v$ is in $S$, and exactly one neighbor $u$ of $v$ is not in $S$, then $u$ is added to $S$ in the next iteration. Hence the failed zero forcing number of a graph was defined to be the size of the largest set of vertices which fails to force all vertices in the graph. A similar property called skew zero forcing was defined so that if there is exactly one neighbor $u$ of $v$ is not in $S$, then $u$ is added to $S$ in the next iteration. The difference is that vertices that are not in $S$ can force other vertices. This leads to the failed skew zero forcing number of a graph, which is denoted by $F^{-}(G)$. In this paper we provide a complete characterization of all graphs with $F^{-}(G)=1$. Fetcie, Jacob, and Saavedra showed that the only graphs with a failed zero forcing number of $1$ are either: the union of two isolated vertices; $P_3$; $K_3$; or $K_4$. In this paper we provide a surprising result: changing the forcing rule to a skew-forcing rule results in an infinite number of graphs with $F^{-}(G)=1$. 
\end{abstract}

\section{Introduction}
Given a graph $G$, the zero forcing number of $G$, $Z(G)$, is the smallest cardinality of any set $S$ of vertices on which repeated applications of the forcing rule results in all vertices being in $S$. The forcing rule is: if a vertex $v$ is in $S$, and exactly one neighbor $u$ of $v$ is not in $S$, then $u$ is added to $S$ in the next iteration. Zero forcing numbers have attracted great interest over the past 15 years and have been well studied \cite{Fallat}. Investigations of the largest size of a set $S$ that does not force all of the vertices in a graph to be in $S$. This quantity is known as the failed zero forcing number of a graph and is denoted by $F(G)$ \cite{Fetcie} and \cite{Adams}. Shitov \cite{Shitov}, proved that determining the failed zero forcing number of a graph is NP-complete. Independently, a closely related property called the zero blocking number of a graph was introduced in 2020 by Beaudouin-Lafona, Crawford, Chen, Karst, Nielsen, and Sakai Troxell \cite{Beaudouin} and Karst, Shen, and Vu \cite{KarstB}. The zero blocking number of a graph $G$ equals $|V(G)|-F(G)$. 
In 2010, researchers from the IMA-ISU research group \cite{ISU} introduced skew zero forcing where any vertex that has all but one of its neighbors colored will force the last remaining vertex to be forced.  
In 2016, Ansill, Jacob, Penzellna, and Saavedra \cite{Ansill} introduced the failed skew zero forcing number, which is the largest size of a set of vertices that does not skew force all of the vertices in the graph.

We will use $K_n$, $P_n$, $C_n$, to denote the complete graph, path, and cycle on $n$ vertices, respectively. A vertex $v$ is a cut-vertex of a graph $G$ if $G-v$ has more components than $G$. 

Throughout the paper we will use colorings to describe failed zero forcing sets where the vertex in $S$ will be referred to as colored and the vertices not in $S$ will be referred to as uncolored.

In 2016 Ansill, Jacob, Penzellna, and Saavedra \cite{Ansill} characterized graphs with extreme values of $F^{-}(G)=0,n-2,n-1$ and $n$. The characterization of graphs with $F^{-}(G)=0$ was surprisingly complex. We review this in the next section.

In this paper we characterize all graphs with a failed skew zero forcing number of $1$. In these graphs, there is a vertex which is a stalled set (meaning that this set of vertices does not force any other vertices), and any pair of vertices forces all vertices in the graph. Within this class of graphs there is an interesting subclass of graphs where there is a unique vertex $v$ that is a failed skew zero forcing set of size 1 and all other vertices in the graph are zero forcing sets.

In 2015, Fetcie, Jacob, and Saavedra \cite{Fetcie}, characterized all graphs with a failed zero forcing number of $1$ - which turned out to be only four graphs: a pair of isolated vertices; $K_3$; $P_3$; and $P_4$. It is surprising to see that the change of the forcing rule where vertices not in $S$ can also force other vertices results in an infinite number of graphs with a failed skew zero forcing number of $1$.

\section{Failed skew zero forcing numbers of graphs}

Ansill, Jacob, Penzellna, and Saavedra \cite{Ansill} presented new results involving failed skew zero forcing numbers of graphs. In particular they provided a characterization for graphs with failed skew zero forcing numbers of $0$. 

They described a family of graphs called doubly extended bouquet-dipoles which is defined below.

\begin{definition}
A graph $G$ is a doubly extended bouquet-dipole if it consists of vertices $u$ and $v$ that are each on a nonempty set of odd cycles, where all other vertices on the cycles have degree two, and $u$ and $v$ are joined by a path of even order that alternates between single even order paths whose internal vertices all have degree two, and multiple even order paths whose internal vertices all have degree two.
\end{definition}

We restate a theorem from Ansill, Jacob, Penzellna, and Savvedra \cite{Ansill} which gives a characterization of all graphs with a failed skew zero forcing number of $0$.

\begin{theorem}{\cite{Ansill}}
$F^{-}(G)=0$ if and only if $G$ is one of the following graphs.
(i) An odd cycle, or a nonempty set of odd cycles whose intersection is a single vertex or (ii) A doubly extended bouquet-dipole.
\end{theorem}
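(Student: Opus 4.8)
The plan is to translate the forcing condition into a statement about matchings and then solve the resulting purely combinatorial problem. First I would record the reduction that $F^{-}(G)=0$ is equivalent to the two requirements that the empty coloring \emph{fails} to force (so that a failed set of size $0$ exists) and that every singleton $\{v\}$ does force all of $G$. The second requirement can be cut down to singletons because the skew closure $\operatorname{cl}(\cdot)$ is monotone under inclusion: if $S\subseteq T$ then every force carried out in the $S$-process remains legal once the larger set $T$ is colored, so $\operatorname{cl}(S)\subseteq\operatorname{cl}(T)$; hence ``no set of size $\ge 1$ fails'' is the same as ``$\operatorname{cl}(\{v\})=V(G)$ for every $v$.''

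Next I would invoke the matching dictionary for skew forcing. The empty coloring forces $G$ exactly when $G$ has a unique perfect matching (the standard skew-forcing fact, cf.\ \cite{ISU}), and the analogous statement for a seed is that $\operatorname{cl}(\{v\})=V(G)$ iff $G$ has a \emph{unique} matching saturating $V(G)\setminus\{v\}$. I would prove this lemma through the usual stalled-set/fort correspondence: a nonempty uncolored set $W$ survives the skew process precisely when every vertex of $G$ has $0$ or at least $2$ neighbours in $W$, and such a $W$ records the symmetric difference of two matchings saturating the same vertex set, so a stalled seed is equivalent to the existence of a second matching saturating $V(G)\setminus\{v\}$. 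Feeding this back, a short parity computation collapses the whole condition:
\[
F^{-}(G)=0 \iff |V(G)|\ \text{is odd and } G-v \text{ has a unique perfect matching for every } v\in V(G).
\]
Indeed, for even $|V(G)|$ every saturating matching is perfect and uniqueness would force a unique perfect matching of $G$, contradicting that the empty coloring fails; so $|V(G)|$ is odd, a matching saturating $V(G)\setminus\{v\}$ must miss $v$, and it is therefore a perfect matching of $G-v$.

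With the problem reduced to ``odd order and every vertex-deletion has a unique perfect matching,'' the \emph{sufficiency} direction becomes a finite matching check: deleting any vertex from an odd cycle, from a bouquet of odd cycles glued at one vertex, or from a doubly extended bouquet-dipole leaves a disjoint union of even paths (and smaller pieces of the same kind) each of which has a unique perfect matching, so the hypothesis holds. Several coarse structural facts also drop out for free and I would establish them first: the condition forces $G$ to be connected (an odd component left untouched by the deletion would have no perfect matching), and forces $\delta(G)\ge 2$ (deleting the neighbour of a degree-$1$ vertex isolates it, killing every perfect matching). In particular $G$ is \emph{factor-critical}: $G-v$ has a perfect matching for every $v$.

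The \emph{necessity} direction is the main obstacle. Here I would lean on Lovász's theorem that a factor-critical graph admits an odd ear decomposition, building $G$ from a single odd cycle by successively attaching ears of odd length, and then impose the uniqueness of the matchings as a severe constraint on how the ears may be attached. The crux is to show that uniqueness forbids all ``non-degenerate'' ears: any ear of length $\ge 3$ attached at two internal points, or two ears sharing more than a single vertex, creates a closed alternating walk and hence a second matching saturating some $V(G)\setminus\{v\}$, contradicting uniqueness. Ruling these out should confine every ear to either closing an odd cycle at one of two distinguished poles or extending an even connecting path between them, which is exactly the odd-cycles-at-two-vertices-joined-by-even-paths skeleton of the definition. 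I would run this as an induction that peels off one odd cycle or one even path at a time and verifies that the unique-near-perfect-matching hypothesis is inherited by the smaller graph; the delicate part---and where I expect the real bookkeeping to live---is tracking parities so that odd cycles contribute the single exposed-then-covered vertex while even paths contribute a forced alternating cover, and confirming that any deviation re-introduces a second saturating matching.
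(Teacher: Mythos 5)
This theorem is not proved in the paper at all --- it is restated from \cite{Ansill} --- so there is no in-paper argument to compare yours against; judged on its own terms, your proposal has a fatal gap. Your preliminary reductions are fine: the skew closure is indeed monotone under inclusion, so $F^{-}(G)=0$ is equivalent to ``the empty set fails and every singleton forces.'' The failure is in the matching dictionary itself. The assertion that the empty coloring forces $G$ \emph{exactly when} $G$ has a unique perfect matching holds in only one direction (forcing implies uniqueness), and the direction you actually use is false. Let $G_0$ consist of triangles $ua_1a_2$ and $vb_1b_2$ joined by the edge $uv$. Then $G_0$ has the unique perfect matching $\{a_1a_2,\,uv,\,b_1b_2\}$, yet the empty coloring stalls immediately: every vertex of $G_0$ has degree at least $2$, and when all vertices are uncolored a skew force can only originate at a vertex of degree $1$. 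The root cause is that the fort--matching correspondence you sketch is not valid: a stalled uncolored set $W$ (every vertex of $G$ having $0$ or at least $2$ neighbours in $W$) need not be a symmetric difference of two matchings --- in $G_0$ the set $W=V(G_0)$ is stalled even though all perfect and near-perfect matchings are unique --- and conversely the vertex set of an alternating cycle need not be stalled, since a vertex off the cycle can have exactly one neighbour on it.

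This false equivalence is precisely what drives your parity collapse, and the collapsed criterion contradicts the theorem you are trying to prove. One checks directly that every singleton skew forces $G_0$ (from $S=\{u\}$: $a_1$ forces $a_2$, $a_2$ forces $a_1$, $u$ forces $v$, and the second triangle then forces itself; the seeds $a_1$, etc., are similar), so by monotonicity every nonempty set forces and $F^{-}(G_0)=0$. But $|V(G_0)|=6$ is even, and $G_0-v$ has odd order for every $v$, hence admits no perfect matching at all; so your criterion ``$|V(G)|$ odd and $G-v$ has a unique perfect matching for every $v$'' rejects $G_0$. Indeed $G_0$ is the smallest graph of type (ii), and the doubly extended bouquet-dipoles are all even-order graphs (the connecting path has even order and each attached odd cycle contributes an even number of new vertices), so your approach captures at best case (i) and excludes case (ii) entirely. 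This also breaks your sufficiency check for the dipoles as written, and the Lov\'asz ear-decomposition argument for necessity is likewise confined to factor-critical graphs, which have odd order. Any correct proof must handle the even-order graphs, where the obstruction is invisible to matchings alone --- for instance by arguing directly on the arrangement of degree-$2$ vertices, odd cycles, and even connecting paths.
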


In this paper we provide a characterization of all graphs with a failed skew zero forcing number of $1$. To show a graph has $F^{-}(G)=0$ one has to show that the set where $S=\varnothing $ is a failed zero forcing set and if $S\neq \varnothing $ then all of the vertices in the graph are forced. However to show a graph has $F^{-}(G)=1$ it is more complex. We need to show that there is a vertex which is a stalled set, and that any pair of vertices forces all vertices in the graph.

\section{Results}
\begin{theorem}
The only disconnected graph with $F^-(G)=1$ is $2K_1$.
\end{theorem}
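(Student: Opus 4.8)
The plan is to exploit the fact that skew zero forcing is a purely local process: the skew forcing rule only ever involves a vertex together with its neighbors, all of which lie in one connected component. First I would make this precise with a decomposition observation. If $G$ has components $G_1,\dots,G_k$ and $S\subseteq V(G)$, then the skew closure of $S$ equals the union of the skew closures of the restrictions $S_i:=S\cap V(G_i)$ computed inside each $G_i$; this follows by induction on forcing steps, since a force applied to a vertex of $G_i$ depends only on $G_i$. Consequently $S$ skew-forces all of $G$ if and only if each $S_i$ skew-forces all of $G_i$, so $S$ is a failed set for $G$ precisely when $S_i$ fails to force $G_i$ for at least one index $i$.

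From this I would extract the main structural formula. To build the largest possible failed set one chooses a single component $G_i$ that admits a failed set, places a maximum failed set of $G_i$ (of size $F^-(G_i)$) on it, and colors every other component completely. This yields
\[
F^-(G)=\max_i\Big(F^-(G_i)+\sum_{j\neq i}|V(G_j)|\Big),
\]
where the maximum ranges only over components $G_i$ that possess a failed set; if no component has one, then every set forces $G$ and $F^-(G)$ is not $1$.

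Next I would substitute $F^-(G)=1$ into this formula. Since every component is nonempty, $\sum_{j\neq i}|V(G_j)|\geq k-1$ and $F^-(G_i)\geq 0$, so each admissible term is at least $k-1$; hence $F^-(G)=1$ forces $k\leq 2$, and as $G$ is disconnected we get $k=2$. Writing $G=G_1\sqcup G_2$, the requirement that both feasible terms be at most $1$ gives $F^-(G_1)+|V(G_2)|\leq 1$ and $F^-(G_2)+|V(G_1)|\leq 1$; because each $|V(G_\ell)|\geq 1$ and each $F^-(G_\ell)\geq 0$, these force $|V(G_1)|=|V(G_2)|=1$, that is, $G=2K_1$. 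The reverse check $F^-(2K_1)=\max(0+1,\,0+1)=1$ falls straight out of the same formula.

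The step I expect to be the main obstacle is the bookkeeping over which components actually contribute failed sets, since the maximum in the formula is restricted to such components. The crucial asymmetry is that a single-vertex component $K_1$ always has a failed set (its empty set cannot force the isolated vertex, so $F^-(K_1)=0$), whereas a larger component such as $K_2$ may have none. I would therefore treat carefully the case where exactly one of the two components lacks a failed set: the surviving term is then $F^-(G_i)+|V(G_j)|$ with $|V(G_j)|\geq 2$, forcing $F^-(G)\geq 2$. Pinning down these definitional edge cases, rather than the arithmetic, is where the real care is required.
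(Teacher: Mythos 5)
Your proposal is correct, and it is actually more careful than the paper's own argument, which makes the comparison worth spelling out. The paper proves the theorem by direct construction: if $G$ has more than two components or a component on at least two vertices, it colors all but one component (or colors the large component) and claims the resulting set of size at least $2$ is stalled. This is quick, but it silently assumes that the uncolored remainder cannot force itself --- an assumption that can fail under the skew rule, where uncolored vertices may force. For $G=2K_2$, coloring one component leaves an uncolored $K_2$ that self-forces, so the paper's constructed set is not a failed set at all; the theorem survives only because such graphs have no failed sets whatsoever (hence $F^-(G)\neq 1$), a case distinction the paper never makes. Your component-wise closure lemma and the resulting formula $F^-(G)=\max_i\bigl(F^-(G_i)+\sum_{j\neq i}|V(G_j)|\bigr)$, with the maximum restricted to components that possess failed sets, handles exactly this edge case: components like $K_2$ with no failed set simply drop out of the maximum, and your final paragraph correctly isolates the asymmetric situation in which the surviving term is forced to be at least $2$. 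What your route buys is rigor and generality --- the formula computes $F^-$ for every disconnected graph, not merely those with $F^-(G)=1$, and it cleanly justifies the converse direction $F^-(2K_1)=1$ as well --- at the modest cost of proving the decomposition lemma; what the paper's route buys is brevity, at the cost of the gap just described.
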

\begin{proof}
Suppose $G$ is disconnected and has more than one two components or a component with more than one vertex. Then let $S$ contain all but one component or the component with two or more vertices. Now $|S|>1$. Now let $G$ have two components that both have one vertex; $G=2K_1$. Let $|S|=1$. It is clear that this is the maximum stalling set of $2K_1$. Therefore, $2K_1$ is the only disconnected graph for which $F^-(G)=1$.
\end{proof}
From here we will assume that all graphs are connected. We first show two small graphs that have a failed skew zero forcing number of $1$.
\begin{lemma}
$F^-(P_3)=1$.
\end{lemma}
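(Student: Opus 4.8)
The plan is to label the path $P_3$ as $v_1 - v_2 - v_3$, with $v_2$ the degree-two center and $v_1, v_3$ the two leaves, and then to establish the two inequalities $F^{-}(P_3) \ge 1$ and $F^{-}(P_3) \le 1$ separately. Throughout I would keep in mind the defining feature of the skew rule: \emph{any} vertex, colored or uncolored, forces its unique uncolored neighbor whenever it has exactly one such neighbor.

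For the lower bound I would exhibit a single-vertex stalled set, the natural candidate being $S = \{v_2\}$. I would check each vertex against the skew rule: $v_2$ has two uncolored neighbors $v_1$ and $v_3$, so it cannot force; each leaf has its only neighbor $v_2$ already colored, hence zero uncolored neighbors, so neither leaf can force either. Thus no force is possible and $S$ stalls while $v_1$ and $v_3$ remain uncolored, so $\{v_2\}$ is a failed skew zero forcing set of size $1$, giving $F^{-}(P_3) \ge 1$.

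For the upper bound I would argue that every set of size at least $2$ skew forces all of $V(P_3)$, so that no failed set can exceed size $1$. Since $|V(P_3)| = 3$, this is a finite check: the full set is trivially forcing, and for each of the three two-element sets it suffices to color the single remaining vertex. In each case some vertex adjacent to the uncolored vertex has it as its unique uncolored neighbor and forces it; for instance, in $S = \{v_1, v_2\}$ the center $v_2$ forces $v_3$, in $S = \{v_2, v_3\}$ the center $v_2$ forces $v_1$, and in $S = \{v_1, v_3\}$ the leaf $v_1$ forces $v_2$. Combining the two bounds yields $F^{-}(P_3) = 1$.

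I do not anticipate a genuine obstacle here, given how small $P_3$ is; the only points demanding care are (i) applying the skew rule in the form that lets uncolored vertices force, which is exactly what renders the leaves inert in the stalled set $\{v_2\}$, and (ii) being exhaustive over the finitely many sets of size $\ge 2$ rather than relying on a symmetry argument alone.
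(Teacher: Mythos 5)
Your proposal is correct and follows essentially the same approach as the paper: exhibit the center vertex $\{v_2\}$ as a skew stalled set of size $1$, then verify that every two-element set forces all of $P_3$. The only difference is cosmetic — you check the three two-element sets exhaustively where the paper invokes a without-loss-of-generality argument.
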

\begin{proof}
Let the external vertices be labeled $v_1$ and $v_2$ and the middle vertex be $w$. If $S=\{w\}$, then $S$ is skew stalled; $F^-(P_3)\geq 1$. If $|S|=2$, then without loss of generality there will be a vertex in $S$ with one neighbor outside of $S$, so the whole graph will be skew forced. Therefore, $F^-(P_3)\leq 1$, giving the desired conclusion that $F^-(P_3)=1$.
\end{proof}
\begin{lemma}
$F^-(K_4)=1$.
\end{lemma}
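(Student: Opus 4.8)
The plan is to mirror the two-sided structure used in the proof of $F^-(P_3)=1$, establishing the lower bound $F^-(K_4)\geq 1$ by exhibiting a single vertex that forms a skew-stalled set, and the upper bound $F^-(K_4)\leq 1$ by showing that every set of two vertices skew forces the whole graph. Label the vertices $v_1,v_2,v_3,v_4$, recalling that in $K_4$ every vertex is adjacent to the other three.

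For the lower bound, I would take $S=\{v_1\}$ and verify that no vertex can apply the skew forcing rule. The colored vertex $v_1$ has three uncolored neighbors, so it cannot force; and each uncolored vertex $v_i$ (for $i\neq 1$) has two uncolored neighbors among the remaining $v_j$, so none of them can force either. Since the crucial feature of skew forcing is that uncolored vertices are permitted to force, the point to emphasize is that even allowing this, every vertex has at least two uncolored neighbors, so $S=\{v_1\}$ is stalled and $F^-(K_4)\geq 1$.

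For the upper bound, I would exploit the vertex-transitivity (indeed the full symmetry) of $K_4$ so that it suffices to examine a single representative set of size two, say $S=\{v_1,v_2\}$. The two uncolored vertices are $v_3$ and $v_4$, and each is adjacent to the other and to the two colored vertices; hence $v_3$ has exactly one uncolored neighbor, namely $v_4$, and symmetrically $v_4$ has exactly one uncolored neighbor $v_3$. Either forcing step colors the remaining vertices, so $S$ skew forces all of $K_4$, giving $F^-(K_4)\leq 1$. Combining the two bounds yields $F^-(K_4)=1$.

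I do not anticipate a genuine obstacle, since $K_4$ is a fixed small graph; the only subtlety, and the step I would state most carefully, is the upper-bound argument, where one must use the skew rule in its full strength by letting the two \emph{uncolored} vertices force one another, rather than expecting a colored vertex to initiate the forcing.
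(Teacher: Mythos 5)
Your proposal is correct and matches the paper's proof essentially step for step: both establish $F^-(K_4)\geq 1$ by checking that a single colored vertex leaves every vertex with at least two uncolored neighbors, and both establish $F^-(K_4)\leq 1$ by noting that with two colored vertices, each uncolored vertex has exactly one uncolored neighbor and so the skew rule completes the forcing. The only cosmetic difference is that you invoke the symmetry of $K_4$ explicitly where the paper says ``without loss of generality.''
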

\begin{proof}
Without loss of generality, choose a vertex on $K_4$. Then that vertex has three neighbors outside of $S$ which implies it will not skew force. Then each neighbor outside of $S$ has two neighbors outside of $S$ which implies each neighbor will not skew force. Thus, $F^-(K_4) \geq 1$. Now suppose $|S|=2$. Then the vertices outside of $S$ only have one uncolored neighbor which will be skew forced into $S$, and then any vertex in $S$ will force the remaining vertex that is outside $S$. So, $F^-(K_4) \leq 1$. Therefore, $F^-(K_4)=1$.
\end{proof}
\indent Now we define a substructure and and a lemma that provides a lower bound for the failed skew zero forcing number of a graph.
\begin{definition}
An $n$-blocking is a $P_{2n+1}$ subgraph whose external vertices have degree higher than 2 and internal vertices have degree 2.
\end{definition}
\indent When $n=1$, we will call this substructure a 1-blocking.
\begin{lemma}
(n-Blocking Lemma) If $G$ has no vertices of degree 1 and contains $k$ disjoint blockings of size $n_1,...n_m$, $$F^-(G)\geq \Sigma_{i=1}^m n_i$$.
\end{lemma}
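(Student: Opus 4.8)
The plan is to exhibit, for each choice of $m$ vertex-disjoint blockings, an explicit skew-stalled set $S$ of size $\sum_{i=1}^m n_i$; since any stalled set that is not all of $V(G)$ fails to skew force the graph, this immediately yields $F^-(G)\ge \sum_{i=1}^m n_i$. For the $i$-th blocking, write its underlying path $P_{2n_i+1}$ as $u_0^i,u_1^i,\dots,u_{2n_i}^i$, where $u_0^i$ and $u_{2n_i}^i$ are the external vertices (degree $>2$ in $G$) and the remaining $u_j^i$ are internal (degree exactly $2$). I would color the $n_i$ odd-indexed internal vertices $u_1^i,u_3^i,\dots,u_{2n_i-1}^i$ and let $S$ be the union of these colored sets over all $i$; disjointness of the blockings guarantees $|S|=\sum_{i=1}^m n_i$.

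Next I would verify that $S$ is skew-stalled, i.e. that no vertex of $G$ has exactly one uncolored neighbor (the condition under which the skew rule would fire). Inside a fixed blocking the coloring alternates colored/uncolored along the internal path, so each colored vertex $u_{2\ell-1}^i$ has its two path-neighbors $u_{2\ell-2}^i,u_{2\ell}^i$ both uncolored (two uncolored neighbors), while each uncolored internal vertex $u_{2\ell}^i$ has both path-neighbors $u_{2\ell-1}^i,u_{2\ell+1}^i$ colored (zero uncolored neighbors). The point that makes these local counts global is that every internal vertex has degree exactly $2$, so a colored vertex is adjacent only to vertices of its own blocking and can never create a single-uncolored-neighbor situation elsewhere.

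The step requiring the most care is the behavior at the external vertices and at the vertices lying outside every blocking, and this is where the hypotheses are used. Because each internal vertex has degree $2$, its two neighbors lie in its own blocking; hence, using that the blockings are vertex-disjoint, no vertex outside a given blocking is ever adjacent to one of its colored internal vertices. Consequently a vertex $x$ outside all blockings has no colored neighbor at all, and since $G$ has no degree-$1$ vertex it has at least two (uncolored) neighbors, so it cannot force. An external vertex $u_0^i$ has degree at least $3$ and exactly one colored neighbor, namely $u_1^i$, so it again retains at least two uncolored neighbors. Thus every vertex has either zero or at least two uncolored neighbors, $S$ is stalled, and because the external vertices are uncolored we have $S\neq V(G)$, completing the bound. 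The one place to be vigilant is the meaning of ``disjoint'': the argument above assumes the blockings share no vertices, so that external endpoints do not accumulate colored neighbors from several blockings at once; I would state this interpretation explicitly, since if two blockings shared an external vertex of degree exactly $3$ that vertex could end up with a single uncolored neighbor and the constructed set would no longer stall.
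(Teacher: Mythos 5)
Your proof is correct, and its core construction is the same as the paper's: color every other internal vertex of a blocking, then check that colored vertices have two uncolored neighbors, uncolored internal vertices have zero, external vertices retain at least two uncolored neighbors, and (using the no-degree-1 hypothesis) nothing outside the blockings can fire. The difference worth noting is one of completeness rather than of method: the paper's written proof colors only the internal vertices of the \emph{largest} blocking and literally concludes $F^-(G)\geq n$ for that single blocking, so as written it establishes only the bound coming from one blocking, not the claimed sum $\sum_{i=1}^m n_i$. Your version takes the union of the alternating colorings over all $m$ vertex-disjoint blockings and verifies that the union is still stalled, which is exactly the step needed to justify the summation in the statement. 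Your key supporting observation --- that every colored vertex is an internal vertex of degree $2$, so any vertex adjacent to a colored vertex must itself lie on that blocking's path, and hence vertex-disjointness prevents colored neighbors from accumulating at shared vertices --- is precisely what makes the union argument go through, and your closing caveat (two blockings sharing a degree-$3$ external vertex would give that vertex a unique uncolored neighbor and break the stall) correctly identifies why ``disjoint'' must mean vertex-disjoint. In short: same approach, but your write-up actually proves the lemma as stated, where the paper's proof leaves the extension from one blocking to the sum implicit.
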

\begin{proof}
Let $P_{2n+1}$ be the largest $n$-blocking on a graph, $G$ with vertex indexing $v_1,...v_{2n+1}$, where $v_1$ and $v_{2n+1}$ are the end vertices with degree larger than 2. Then define $S=\{v_2,v_4,...,v_{2n}\}$ such that $|S|=n$. Then each vertex in $S$ has two neighbors outside $S$ and will not skew force, $v_1$ and $v_{2n+1}$ have degree higher than 2, so they have at least two neighbors outside $S$ and will not skew force, and each $v_{2i+1}$ inside the $n$-blocking has exactly 2 neighbors inside $S$. Since $G$ has no vertices of degree 1, no vertex outside of the $n$-blocking will skew force either. So, $S$ is skew stalled, and $F^-(G)\geq n$.
\end{proof}
\begin{corollary}
If $G$ is an odd cycle with a chord that creates an odd cycle of length $2n+1$, then $F^-(G)=n$
\end{corollary}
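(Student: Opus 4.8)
The plan is to prove the two inequalities $F^-(G)\ge n$ and $F^-(G)\le n$ separately after fixing notation. Write $G$ as the odd cycle $C_m$ together with a single chord joining two vertices $a$ and $b$; the chord splits $C_m$ into two internally disjoint paths, and since $m$ is odd exactly one of the two resulting cycles is odd. By hypothesis this odd cycle has length $2n+1$, so the path $a=p_0,p_1,\ldots,p_{2n-1},p_{2n}=b$ realizing it (together with the chord $ab$) has $2n$ edges, while the complementary path $a=q_0,q_1,\ldots,q_{2k-1}=b$ has an odd number of edges and forms an even cycle of some length $2k$. Only $a$ and $b$ have degree $3$; every other vertex has degree $2$, so in particular $G$ has no vertex of degree $1$.

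For the lower bound I would simply invoke the $n$-Blocking Lemma. The path $a,p_1,\ldots,p_{2n-1},b$ is a $P_{2n+1}$ whose two external vertices $a,b$ have degree $3>2$ and whose internal vertices have degree $2$; hence it is an $n$-blocking. Since $G$ has no degree-$1$ vertices, the lemma immediately gives $F^-(G)\ge n$.

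The substance is the upper bound, for which I would characterize every skew-stalled set exactly. Writing $U=V(G)\setminus S$ for the uncolored vertices, $S$ is skew-stalled precisely when no vertex has exactly one neighbor in $U$. At each degree-$2$ vertex this forces its two neighbors to have the same status (both in $U$ or both out of $U$), so membership in $U$ propagates along each path two steps at a time. Carrying out this propagation along the odd path shows that $a$, $b$, and all even-indexed $p_i$ share a common status $\alpha$, while the odd-indexed $p_i$ share a status $\beta$; carrying it out along the even path, the parity of that cycle forces every $q_i$ to also equal $\alpha$. Finally the stalling conditions at the degree-$3$ vertices $a$ and $b$ both read $\beta+2\alpha\neq 1$ (counting uncolored neighbors with $\alpha,\beta\in\{0,1\}$), which rules out $(\alpha,\beta)=(0,1)$ and leaves exactly three stalled configurations: $S=\varnothing$, $S=V(G)$, and $S=\{p_1,p_3,\ldots,p_{2n-1}\}$ of size $n$. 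Thus the largest proper stalled set has size $n$, giving $F^-(G)\le n$ and hence equality.

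The main obstacle I anticipate is the bookkeeping in the propagation step: I must track index parities carefully to see why the even cycle collapses all of $a,b,q_1,\ldots,q_{2k-2}$ to the single value $\alpha$ (removing any freedom there) while the odd cycle leaves precisely the odd-indexed chain $p_1,\ldots,p_{2n-1}$ free, since this asymmetry between the even and odd sub-cycle is exactly what produces the answer $n$ independent of $k$. Care is also needed to confirm that the constraint at $a$ and $b$ eliminates only the configuration $(\alpha,\beta)=(0,1)$ and that each surviving configuration is genuinely stalled.
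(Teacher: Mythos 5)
Your proof is correct, and it is worth noting that the paper gives no proof of this corollary at all: it is stated as an immediate consequence of the $n$-Blocking Lemma, so the only reasoning the paper implicitly supplies is your lower bound, namely that the path of $2n$ edges which closes into the odd cycle via the chord is a $P_{2n+1}$ with endpoints of degree $3$ and internal vertices of degree $2$, hence an $n$-blocking, giving $F^-(G)\ge n$. The upper bound, which is the actual substance, is entirely your addition, and it holds up. Your reduction to stalled sets is legitimate (the skew closure of any failed set is a stalled proper subset of $V(G)$, so $F^-(G)$ equals the maximum size of a proper stalled set), and the parity bookkeeping you worried about does come out right: because the $p$-path has even length, $a=p_0$ and $b=p_{2n}$ lie in the same parity class, so the degree-$2$ conditions give them a common status $\alpha$ while leaving the odd-indexed class $\beta$ free; because the $q$-path has odd length, its two parity classes are anchored one to $a$ and one to $b$, so both collapse to $\alpha$. (For this step you implicitly need the $q$-path to have internal vertices, i.e., the even cycle to have length at least $4$, which is automatic in a simple graph since a "path" of length $1$ would duplicate the chord.) The conditions at $a$ and $b$ then exclude exactly the configuration with the odd-indexed $p_i$ uncolored and all else colored, leaving $\varnothing$, $V(G)$, and the $n$-element set $\{p_1,p_3,\ldots,p_{2n-1}\}$ as the only stalled sets, whence $F^-(G)=n$. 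In short: where the paper asserts, you prove; your characterization of all stalled sets is stronger than what is needed and could be reused for similar chord-cycle computations.
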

\indent Notice that when $n=1$, then this family of graphs is defined by a 1-blocking. This useful result begins our notions of how to characterize all graphs with $F^-(G)=1$. The following three lemmas provide useful descriptions of graphs with $F^-(G)=1$, specifically surrounding their failed skew stalling sets.

It is possible for a graph where $F^-(G)=1$ and there is more than one vertex that is a stalling set. An example of a graph with two different vertices that are both stalling sets is shown in Figure 1.
\begin{center}
  \begin{tikzpicture}
    \coordinate (1) at (0,0);
    \coordinate (2) at (0,2);
    \coordinate (3) at (1.5,1);
    \coordinate (4) at (2.5,1);
    \coordinate (5) at (3.5,0);
    \coordinate (6) at (3.5,1);
    \coordinate (7) at (4.5,1);
    \coordinate (8) at (5.5,1);
    \coordinate (9) at (7,0);
    \coordinate (10) at (7,2);
    \draw (1) -- (2) -- (3) -- (1);
    \draw (3) -- (4) -- (5) -- (6) -- (7) -- (8);
    \draw (8) -- (9) -- (10) -- (8);
    \draw (4) -- (6);
     \foreach \point in {1,2,3,4,6,8,9,10} \fill[white, draw=black] (\point) circle (4pt);
      \foreach \point in {5} \fill (\point) circle (4pt)[blue];
       \foreach \point in {7} \fill (\point) circle (4pt)[red];
         \end{tikzpicture}
         
 Figure 1. A graph with two different stalling sets of size 1.
 \end{center}
  In our next lemma we show that it is not possible to have more than 2 different vertices that are each stalling sets.
\begin{lemma}
Unless $G=K_4$, there cannot be more than $2$ vertices that are each failed skew zero forcing sets of size $1$ in a graph where $F^{-}(G)=1$.
\end{lemma}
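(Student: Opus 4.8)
The plan is to argue by contradiction: assuming $F^{-}(G)=1$ and that $v_1,v_2,v_3$ are three distinct vertices each forming a failed skew zero forcing set of size $1$, I would show that $G$ must be $K_4$. The first step is to translate the statement ``$\{v\}$ is skew stalled'' into degree conditions. Starting from $S=\{v\}$, the vertex $v$ itself forces iff it has exactly one uncolored neighbor, i.e. iff $\deg(v)=1$; an uncolored vertex $u$ forces iff all but one of its neighbors are colored, which, since $v$ is the only colored vertex, happens exactly when $u\sim v$ and $\deg(u)=2$, or $u\not\sim v$ and $\deg(u)=1$. Hence $\{v\}$ is stalled iff $\deg(v)\neq 1$, no neighbor of $v$ has degree $2$, and no non-neighbor of $v$ has degree $1$. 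With two or more stalling vertices present, a degree-$1$ vertex would have to be adjacent to each of them (otherwise it is a degree-$1$ non-neighbor of one), which is impossible for a single edge; so $G$ has no degree-$1$ vertex, and the stalling condition reduces to the clean statement that \emph{every neighbor of a stalling vertex has degree at least $3$}.

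The key structural observation I would establish next is: \emph{any two stalling vertices $v_i,v_j$ have a common neighbor of degree exactly $3$}. Because $F^{-}(G)=1$, the set $\{v_i,v_j\}$ cannot be a failed set, so it must skew force every vertex; in particular some vertex must make the first force, i.e. have exactly one uncolored neighbor when only $v_i,v_j$ are colored. Neither $v_i$ nor $v_j$ can be that vertex, since this would require one of them to have degree $2$ while being adjacent to the other, contradicting that neighbors of a stalling vertex have degree $\geq 3$. An uncolored vertex $w$ has exactly one uncolored neighbor only if $|N(w)\cap\{v_i,v_j\}|=\deg(w)-1$; the possibilities $\deg(w)=1$ and $\deg(w)=2$ are excluded (no degree-$1$ vertex exists, and a degree-$2$ neighbor of a stalling vertex is forbidden), leaving $\deg(w)=3$ with $w$ adjacent to both $v_i$ and $v_j$. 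Thus such a common degree-$3$ neighbor must exist.

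Applying this to the three pairs yields degree-$3$ common neighbors $w_{12},w_{13},w_{23}$ with $N(w_{ij})=\{v_i,v_j,z_{ij}\}$. I would then split into cases according to whether some vertex is adjacent to all three stalling vertices and has degree $3$, equivalently whether some $z_{ij}$ is the third stalling vertex. In the principal case $N(w)=\{v_1,v_2,v_3\}$, forcing from $\{v_1,v_2\}$ begins with $w\to v_3$; once $v_1,v_2,v_3$ are colored, a vertex $v_i$ forces $w$ exactly when all its neighbors lie in $\{v_1,v_2,v_3,w\}$. I would use the degree conditions (every neighbor of a $v_i$ has degree $\geq 3$, yet within a $4$-vertex set a vertex has degree at most $3$) together with the requirement that $\{v_1,v_2\}$ color \emph{everything}, to conclude that no vertex lies outside $\{v_1,v_2,v_3,w\}$ and that these four vertices are mutually adjacent, giving $G=K_4$. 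The complementary configurations, where no degree-$3$ vertex is adjacent to all three $v_i$, I would eliminate by showing that the forced third neighbors $z_{ij}$ together with the surrounding degrees either produce a stalled set of size $2$ or violate the stalling conditions.

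The main obstacle I anticipate is this final case analysis: bounding the vertex set by ruling out any vertices outside $\{v_1,v_2,v_3,w\}$, and disposing of the non-coincident configurations of $w_{12},w_{13},w_{23}$. The difficulty is that the hypothesis $F^{-}(G)=1$ must be used in its strong form---not merely that a single vertex stalls, but that \emph{no} pair of vertices stalls---and that one must track the skew-forcing cascade, in which uncolored vertices may force, through several steps while maintaining precise degree bookkeeping. Getting these two ingredients to interact correctly, rather than the translation of stalling into degrees or the existence of the degree-$3$ common neighbor, is where the real work of the argument lies.
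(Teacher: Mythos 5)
Your first half is sound, and in fact sharper than the corresponding step in the paper: the degree characterization of a stalled singleton, the exclusion of degree-$1$ vertices, and the observation that every pair of stalling vertices must have a common neighbor of degree exactly $3$ (since only such a vertex can perform the first force from $\{v_i,v_j\}$) are all correct. The genuine gap is that the proof stops where the lemma's content begins: both the principal case ($N(w)=\{v_1,v_2,v_3\}$ should force $G=K_4$) and the complementary case (distinct $w_{12},w_{13},w_{23}$ should yield a contradiction) are left as statements of intent (``I would use\dots'', ``I would eliminate by showing\dots''), and you yourself flag this as ``where the real work lies.'' Moreover, your proposed method for that endgame---tracking the skew-forcing cascade from $\{v_1,v_2\}$ with degree bookkeeping near the quadruple $\{v_1,v_2,v_3,w\}$---has a structural problem: because uncolored vertices also force, the very first forces out of $\{v_1,v_2\}$ pass through vertices such as $w_{12}$ to third neighbors $z_{12}$ that can lie arbitrarily far from that quadruple, so no local analysis around these four vertices can by itself bound $V(G)$ or certify that the cascade completes rather than stalls.

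The paper closes the argument by a different, cascade-free mechanism, and this is the idea your outline is missing: it converts three stalling singletons directly into an \emph{explicit stalled set of size at least $2$}, contradicting $F^{-}(G)=1$, rather than running the forcing process. Concretely: if some pair $a_i,a_j$ has no common neighbor, then $\{a_i,a_j\}$ itself stalls (your observation contains this); if $a_1,a_2,a_3$ share a common neighbor and $N(a_1)=N(a_2)=N(a_3)$, then a set of size $n-2$ (everything but two of the $a_i$) stalls; and if the neighborhoods are pairwise distinct, the paper argues that $\{a_1,a_2,a_3\}$ itself stalls, using that all neighbors of the $a_i$ have degree at least $3$. The paper's execution is admittedly loose---its two cases do not exhaust the configurations (every pair can share a neighbor while no vertex is adjacent to all three), and your own $w_{ij}$ observation shows $\{a_1,a_2,a_3\}$ fails to stall precisely when some third neighbor $z_{ij}$ escapes the triple, which is exactly where the $K_4$ exception has to enter---but the strategy of exhibiting stalled sets is what makes the contradiction finite and checkable. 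Without either that mechanism or a completed cascade analysis, your proposal does not yet prove the lemma.
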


\begin{proof}
We will assume that $G$ is not $K_4$. We then proceed by contradiction. Let $G$ be a graph and $k\geq 3$ where $F^{-}(G)=1$ and $a_{1},a_{2},...,a_{k}$ are vertices that are each failed skew zero forcing sets of size $1$. We consider two cases. 

\begin{itemize}
    \item Case 1. There does not exist more than two vertices from the set $\{a_{1},a_{2},..,a_{t}\}$ that share a common neighbor. Without loss of generality consider three vertices $a_{1},a_{2}$, and $a_{3}$ where $N(a_{1})\cap N(a_{2})=\emptyset $. Then $\{a_{1},a_{2}\}$ form a failed skew zero forcing set of size $2$, which contradicts the assumption that $F^{-}(G)=1$.
    \item Case 2. There exist vertices $a_{1},a_{2},$ and $a_{3}$ that share a common neighbor $v$. Now since each of the vertices $a_{1},a_{2},$ and $a_{3}$ are failed skew zero forcing sets of size $1$, all of the neighbors of each $a_{i}$, $1\leq i\leq t$ have degree $3$ or more. If $N(a_{1})=N(a_{2})$, $N(a_{1})=N(a_{3})$, and $N(a_{2})=N(a_{3})$ then $F^{-}(G)=\left\vert V(G)\right\vert =n-2.$ Suppose that $N(a_{1})\neq N(a_{2})$, $N(a_{1})\neq N(a_{3})$, and $N(a_{2})\neq N(a_{3})$. Then there is a vertex $x$ where $x\in N(a_{1})$ but $x\notin N(a_{2})$, and there is a vertex $y$ where $y\in N(a_{3})$ but $x\notin N(a_{2})$.  Hence each of the vertices $a_{1},a_{2},$ and $a_{3}$ have two uncolored neighbors each of which have degree at least $3$. Then $\{a_{1},a_{2},a_{3}\}$ forms a failed skew zero forcing set of size $3$, which contradicts the assumption that $F^{-}(G)=1$.
\end{itemize}
\end{proof}

 \begin{lemma}
(Cut vertex lemma) Let $G$ be a graphs with a cut vertex $v$ with $\deg(v)>2$. Let $H_1,H_2,..,H_t$ be the components of $G-v$ where $\left\vert H_{1}\right\vert \geq \left\vert H_{2}\right\vert \geq \cdots \geq \left\vert H_{t}\right\vert $.
Then $F^{-1}(G)\geq 1+\sum_{i=1}^{t-2}\left\vert H_{i}\right\vert $
\end{lemma}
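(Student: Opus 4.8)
The plan is to prove the inequality by exhibiting a single explicit skew-stalled set $S$ of the right size; since any proper skew-stalled set is a failed skew zero forcing set, producing one of size $1+\sum_{i=1}^{t-2}|H_i|$ immediately yields $F^-(G)\ge 1+\sum_{i=1}^{t-2}|H_i|$. The key observation guiding the construction is that $1+\sum_{i=1}^{t-2}|H_i| = |V(G)|-|H_{t-1}|-|H_t|$, so the natural candidate is to color the cut vertex together with all of the $t-2$ largest components and to leave only the two smallest components uncolored. Concretely I would take $S=\{v\}\cup\bigcup_{i=1}^{t-2}H_i$, so that the uncolored set is $U=V(G)\setminus S=H_{t-1}\cup H_t$ and $|S|=1+\sum_{i=1}^{t-2}|H_i|$. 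This sacrifices the fewest vertices while keeping $v$ colored so that each fully colored component is ``sealed off'' from the uncolored region.

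First I would check stalling at the two easy classes of vertices. For a vertex $u$ in a colored component $H_i$ with $i\le t-2$, every neighbor of $u$ lies in $H_i\cup\{v\}$ (since $H_i$ is a connected component of $G-v$), and all of these are colored, so $u$ has no uncolored neighbor and cannot force. For the cut vertex $v$ itself, connectivity of $G$ forces $v$ to have at least one neighbor in each component of $G-v$, in particular at least one uncolored neighbor in $H_{t-1}$ and at least one in $H_t$; hence $v$ has at least two uncolored neighbors and cannot force. Here the cut-vertex property supplies $t\ge 2$, and the hypothesis $\deg(v)>2$ marks $v$ as a genuine branch vertex, ensuring this case is not already subsumed by the path/blocking analysis.

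The main obstacle is the remaining class: vertices inside the uncolored components $H_{t-1}$ and $H_t$. Such a vertex $u$ has its only possible colored neighbor at $v$, so its uncolored neighbors are exactly its neighbors inside its own component. To keep $S$ stalled I must rule out any $u\in H_{t-1}\cup H_t$ that has precisely one neighbor within its own component, and this boundary interaction — not the size bookkeeping — is the real content of the argument. Singleton components are harmless (their single vertex has no internal neighbor and is adjacent only to the colored $v$), but a component in which some vertex adjacent to $v$ has a unique internal neighbor would be forced and would break stalling. I therefore expect the delicate step to be showing that, under the lemma's standing hypotheses, the two smallest components can be left uncolored without creating such a forced vertex — equivalently, that each behaves like a min-degree-$\ge 2$ block once $v$ is colored. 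This is exactly the point where the structure of the sacrificed components must be controlled, and it is worth isolating as the crux, since attaching rigid odd-cycle pieces at $v$ is precisely the configuration that the $F^-(G)=0$ theorem shows can drive the failed skew zero forcing number all the way down.
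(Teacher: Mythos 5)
Your construction is exactly the paper's: color $S=\{v\}\cup H_1\cup\cdots\cup H_{t-2}$ and argue it cannot force the two smallest components. Your checks on the colored side are correct (vertices of a colored component have no uncolored neighbors; $v$ has at least one uncolored neighbor in each of $H_{t-1}$ and $H_t$, hence at least two), and you are in fact more careful than the paper, whose proof verifies only the vertices \emph{in} $S$ --- insufficient in the skew setting, where uncolored vertices may also force. But your proposal then stops precisely where the real work begins: you explicitly defer the verification that no vertex of $H_{t-1}\cup H_t$ has exactly one uncolored neighbor, saying you ``expect the delicate step to be showing'' that the sacrificed components create no forced vertex. That step is never carried out, so this is not a proof; it is an accurate description of what a proof would have to do.

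The deeper problem is that this missing step cannot be supplied, because the lemma as stated is false --- for exactly the reason you flagged. Let $G$ be three triangles whose pairwise intersection is the single vertex $v$. Then $v$ is a cut vertex with $\deg(v)=6>2$, the components $H_1,H_2,H_3$ of $G-v$ each have two vertices, and the lemma would assert $F^-(G)\geq 1+|H_1|=3$; but by Theorem 2.1 (the characterization of $F^-(G)=0$ quoted in Section 2), a nonempty set of odd cycles meeting in one vertex has $F^-(G)=0$. Concretely, with $S=\{v\}\cup H_1$, each uncolored vertex $w_1$ of an uncolored triangle $\{v,w_1,w_2\}$ has the single uncolored neighbor $w_2$, so the uncolored components color themselves completely and then $v$ forces whatever remains; the same collapse occurs when $H_t$ is a pendant vertex and $H_{t-1}$ a pendant edge. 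So the ``crux'' you isolated is not merely delicate: it is a hole that generates counterexamples, and it sits unacknowledged in the paper's own one-line proof. Any correct version of this lemma needs an additional hypothesis controlling the internal structure of the two components left uncolored (ruling out, e.g., odd cycles and pendant trees hanging at $v$), and your review of where the argument strains is the right starting point for formulating that hypothesis.
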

\begin{proof}
If $v$ is a cut vertex of degree $t>2$, we can take the vertices in the $t-2$ largest component(s) and add $v$. Since each vertex in this set either has 0 or 2 uncolored neighbors, this set is a failed zero forcing set. Hence $F^{-1}(G)\geq 1+\sum_{i=1}^{t-2}\left\vert H_{i}\right\vert $
\end{proof}
\begin{lemma}
If $F^-(G)=1$ and $\{v\}=S$, $v$ cannot have a neighbor of degree 2
\end{lemma}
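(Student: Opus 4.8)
The plan is to derive a contradiction by producing a failed skew zero forcing set of size $2$ whenever $v$ has a degree-$2$ neighbor, using the fact—distinctive to skew forcing—that an \emph{uncolored} vertex may itself perform a force. Throughout I take $S=\{v\}$ to be the asserted failed skew zero forcing set, so its skew closure omits at least one vertex of $G$.

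First I would record the clean consequence that $\{v\}$ must in fact be skew stalled. If some vertex had exactly one uncolored neighbor, it would force an uncolored vertex $w_0$; then $\{v,w_0\}$ (with $w_0\neq v$ since forced vertices are uncolored) has the same skew closure as $\{v\}$, which is not all of $V(G)$, so $\{v,w_0\}$ would be a failed skew zero forcing set of size $2$, contradicting $F^-(G)=1$. Hence, under the hypothesis, no vertex of $G$ has exactly one uncolored neighbor relative to $S=\{v\}$.

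The main step is then immediate. Suppose toward a contradiction that $v$ has a neighbor $u$ with $\deg(u)=2$. Since $G$ is simple, $u$ has exactly two distinct neighbors, one of which is $v$; call the other one $w$, noting $w\neq v$. As $S=\{v\}$, the neighbor $v$ is colored and $w$ is uncolored, so $u$ has exactly one uncolored neighbor. This contradicts the stalled-ness established above (equivalently, $u$ forces $w$, and $\{v,w\}$ is then a failed set of size $2$). Therefore $v$ has no neighbor of degree $2$.

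I expect no serious obstacle, as the argument is short. The only point meriting care is the distinctively skew feature that the uncolored degree-$2$ vertex $u$ is the one performing the force—a vertex that would be inert under ordinary zero forcing—so the contradiction hinges on correctly applying the skew rule to $u$ itself rather than to $v$.
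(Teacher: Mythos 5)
Your proof is correct and follows essentially the same approach as the paper: the degree-$2$ neighbor of $v$ has exactly one uncolored neighbor and so performs a skew force, contradicting that $\{v\}$ is stalled. The only difference is that you explicitly justify why $\{v\}$ must be stalled (via maximality and $F^-(G)=1$), a step the paper's two-line proof leaves implicit.
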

\begin{proof}
If $v$ has a neighbor, $w$, of degree 2, then $w$ will skew force its neighbor, $u$, contradicting that $S$ is skew stalled. So, $v$ cannot have a neighbor of degree 2.
\end{proof}

\indent With these lemmas in mind, we can create a necessary condition for all graphs with $F^-(G)=1$.
\begin{lemma}
Excluding $P_3$ and $K_4$, if $F^-(G)=1$, then $G$ contains 1 disjoint 1-blocking and no other disjoint $n$-blocking.
\end{lemma}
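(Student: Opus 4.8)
The plan is to split the statement into two essentially independent claims and prove them with the tools already developed. The first is the negative part: that $G$ can contain neither two disjoint blockings nor a single $n$-blocking with $n\geq 2$. The second is the positive part: that $G$ must nonetheless contain at least one $1$-blocking. Throughout I fix a vertex $v$ for which $\{v\}$ is skew-stalled; such a $v$ exists because $F^-(G)=1$, and the same hypothesis tells me that \emph{every} two-vertex set skew-forces all of $G$. From the stalling of $\{v\}$ I first record the degree data I will lean on: $\deg(v)\geq 2$ (a degree-$1$ vertex would skew-force its unique neighbor), every vertex not adjacent to $v$ has degree at least $2$ (otherwise it would skew-force under $\{v\}$), and, by the preceding lemma, no neighbor of $v$ has degree $2$. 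Hence every neighbor of $v$ has degree $1$ or at least $3$, and in particular every degree-$2$ vertex of $G$ is a \emph{non}-neighbor of $v$.

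For the negative part I would invoke the $n$-Blocking Lemma, whose hypothesis requires $G$ to have no vertex of degree $1$. So the first step is to eliminate pendant vertices: if $w$ is a leaf then, as above, $w$ must be adjacent to $v$, and I would show that, unless $G=P_3$, colouring $w$ together with $v$ produces a second skew-stalled set, contradicting $F^-(G)=1$. Once minimum degree at least $2$ is secured, the $n$-Blocking Lemma applies verbatim: two disjoint blockings, or a single blocking of size $n\geq 2$, would give $F^-(G)\geq 2$, a contradiction. Thus every family of pairwise-disjoint blockings in $G$ reduces to a single $1$-blocking, which is exactly the ``no other disjoint $n$-blocking'' conclusion.

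For the existence part I would argue from $v$ and its degree profile. If $\deg(v)=2$, then both neighbors of $v$ have degree at least $3$, so $v$ is itself the middle vertex of a $1$-blocking and we are finished. The work is in the case $\deg(v)\geq 3$, where I would first show that if $G$ has \emph{no} degree-$2$ vertex at all then minimum degree at least $3$, together with the facts that $v$ is a cut-free stalled vertex and that every pair of vertices skew-forces $G$, collapses the graph to $G=K_4$ (using the cut-vertex lemma to rule out cut vertices of degree $>2$ and then pinning down the dense structure), which is excluded; hence a degree-$2$ vertex exists. Taking a maximal path of consecutive degree-$2$ vertices, its endpoints attach to vertices of degree at least $3$, and I would use the $n$-Blocking Lemma (to forbid a long such path, which would be a blocking of size $\geq 2$) together with the $F^-(G)=0$ characterization (to forbid $G$ being a pure cycle) to force this path to have a single interior degree-$2$ vertex, i.e.\ a genuine $1$-blocking. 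The main obstacle is precisely this existence argument: the blocking-lemma half is a clean lower-bound contradiction, whereas showing that ``no $1$-blocking'' can occur only for $K_4$ requires ruling out dense, highly $2$-forceable graphs and carefully controlling maximal degree-$2$ chains of even interior length, which the $n$-blocking machinery does not directly eliminate; the pendant-vertex subcase, which must be funnelled entirely into $G=P_3$, is the other delicate point.
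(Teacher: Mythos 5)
Your negative half (first secure minimum degree at least $2$, then quote the $n$-Blocking Lemma) is essentially the paper's opening paragraph, and you are right that the lemma's no-degree-$1$ hypothesis must be addressed, a point the paper silently skips; but both steps that you yourself flag as delicate are genuine gaps, and the first one cannot be repaired. Your pendant-elimination claim --- that a leaf $w$ together with the stalled vertex $v$ yields a second stalled set unless $G=P_3$ --- is true only when $\deg(v)\geq 3$. If $\deg(v)=2$, then in $\{v,w\}$ the vertex $v$ has exactly one uncolored neighbor, forces it, and the forcing can sweep the entire graph, so no contradiction appears. Concretely, take the triangle on $\{x,y_1,y_2\}$ and attach a pendant path by adding vertices $v,w$ and edges $xv$, $vw$. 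Here $\{v\}$ is stalled ($w$ has zero uncolored neighbors, every other vertex has two), and a short check shows that every set of two or more vertices skew forces all of $G$, so $F^-(G)=1$. Yet this graph has a leaf and contains no $1$-blocking at all, since $x$ is its only vertex of degree greater than $2$ while a $1$-blocking requires two such external vertices. So the pendant case cannot be funnelled into $P_3$: it defeats your step, and it is in fact a counterexample to the lemma as stated --- a blind spot the paper's own proof shares, since its Lemma 3.6 forbids degree-$2$ neighbors of $v$ but not degree-$1$ neighbors.

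Second, your existence argument in the case $\deg(v)\geq 3$ rests on machinery that cannot deliver it. A maximal chain of degree-$2$ vertices need not be a blocking of any size: a chain with an even number of interior vertices between two vertices of degree at least $3$, and a chain whose two ends attach to the same vertex (a pendant odd cycle), are not $P_{2n+1}$ subgraphs of the required form, and neither the $n$-Blocking Lemma nor the $F^-(G)=0$ characterization rules them out --- indeed the paper's Theorem 3.8 explicitly allows both structures in graphs with $F^-(G)=1$. Hence ``a degree-$2$ vertex exists'' does not give ``some maximal chain is a $1$-blocking.'' The paper takes a different route here: it shows that $\deg(v)\geq 3$ is outright impossible, invoking the cut-vertex lemma when $v$ is a cut vertex and otherwise exhibiting explicit stalled sets of size at least $2$ (built from $v$, a neighbor $u$, an adjacent even path, or an adjacent odd cycle), so that the $1$-blocking is found sitting at $v$ itself. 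Your auxiliary claim that minimum degree $3$ together with $F^-(G)=1$ forces $G=K_4$ is likewise an unproved assertion that would need an argument of comparable length to the paper's; as written, the proposal does not establish the existence half of the lemma.
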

\begin{proof}
By the $n$-blocking lemma, since $F^-(G)=1$, if $G$ contains an $n$-blocking, then $\Sigma_{i=0}^m n_i =1$, where $n_i$ is the size of each disjoint $n$-blocking. Hence $G$ contains at most 1 $n$-blocking where $n=1$.\\
\indent To show that $G$ contains at least one disjoint 1-blocking, recall that if $\{v\}$ is our maximum failed skew stalling set, that unless $v$ is on a $P_3$ the neighbors of $v$ must have degree greater than 2. It remains to confirm that $v$ necessarily has degree 2 which will imply the existence of the 1-blocking.\\
\indent Suppose $\deg(v)\geq 3$. We know its neighbors must have degree higher than 2 since if a neighbor $u$ has degree 1 then ${u,v}$ would be a stalled set, contradicting the assumption that $F^{-}(G)=1$. \\\indent Suppose $v$ is a cut vertex of degree $t\geq3$. By Lemma 3.5 the set $S$ consisting of $v$ and vertices in the $t-2$ largest subsets of $G-v$ is skew stalled. \\\indent Next suppose $v$ is not a cut vertex.  Since $v$ has no neighbors of degree 2, $v$ must be adjacent to a vertex $u$, which prevents the creation of a new $n$-blocking. Recall that $\deg(u)\geq 3$.  If $u$ has no neighbor of degree 2, let $S=\{v,u\}$. $S$ is skew stalled with $|S|=2$, contradicting the hypothesis that $F^-(G)=1$. If $u$ has a neighbor of degree 2, that neighbor must either be part of an even path or an odd cycle. If that neighbor is on an even path, let $S=\{v,u\}$ and skew force along the even path until it skew stalls at the next vertex with degree higher than 2. If it is part of an odd cycle and has a neighbor with degree higher than 2, then let $S$ contain only $u$ and the vertices of the cycle to which $u$ belongs. Since $u$ has two uncolored neighbors of degree higher than 2, $S$ is skew stalled with $|S|>1$, which contradicts the original hypothesis. Therefore, if $F^-(G)=1$, then it must be that $\deg(v)=2$.\\
\indent We still have to consider the case when $G$ has two distinct maximum skew stalling sets of cardinality 1, $\{v\}$ and $\{w\}$. By Lemma 3.6, neither vertex can have a neighbor of degree 2. Their shared neighbor, $u$ must have degree 3 or $\{v,w\}$ will be a skew stalling set.\\
\indent Suppose first that $\deg(v)$ and $\deg(w)>2$. If $u$ is a cut vertex, then by the above lemma, $F^-(G)>1$, so $u$ cannot be a cut vertex. If $u$ is not a cut vertex, then its neighbor, $x$ will have degree 2 or higher. If $\deg(x)=2$, then set $S=\{u\}$ and skew force along the even path until $S$ skew stalls upon reaching some vertex with degree 3 or higher. If $\deg(x)>2$ and the neighbor of $x$ has degree 3 or higher, then set $S=\{x,u\}$, which will stall and contradict the original statement of the theorem. So, let $x$ have at least one neighbor of degree 2. Then, since $u$ is not a cut vertex, this neighbor may be on an even path or an odd cycle. If $x$ only has a degree 2 neighbor on an odd cycle, then $x$ must have some other degree 3 neighbor that connects to other parts of the graph, since $u$ is not a cut vertex. In that case, let $S$ contain $x$ and the vertices of that odd cycle. Otherwise, let $S=\{x\}$ and skew force along optional odd cycle and then the even path until it reaches a degree 3 vertex and skew stalls. Then $|S|>1$, contradicting the original hypothesis.
\\\indent Therefore, $G$ contains at most and at least 1 disjoint 1-blocking; $G$ contains exactly 1 disjoint 1 blocking and no other $n$-blocking if $F^-(G)=1$.
\end{proof}
\indent As we close in on the necessary and sufficient conditions for $F^-(G)=1$, we consider the following important substructure that can exist within such graphs:
\begin{definition}
An even multiple path has vertices $u$ and $v$ with 2 or more even paths connecting them and are buffered on both sides by an even path.
\end{definition}
An example of an even multiple path can be seen in the middle of the first graph in Figure 2. Next we provide our main result, giving a full characterization of all graphs with $F^-(G)=1$.
 
    \begin{center}
    \begin{tikzpicture}
    \coordinate (1) at (0,0);
    \coordinate (2) at (-1.25,0.65);
    \coordinate (3) at (-1,2.05);
    \coordinate (4) at (0.35,2.25);
    \coordinate (5) at (1,1);
    \coordinate (6) at (1,2);
    \coordinate (7) at (1.5,2);
    \coordinate (8) at (1,0);
    \coordinate (9) at (1.5,0);
    \coordinate (10) at (1.5,1);
    \coordinate (11) at (2,1);
    \coordinate (12) at (2.5,1);
    \coordinate (13) at (3.5,1);
    \coordinate (14) at (4,1);
    \coordinate (15) at (5,1);
    \coordinate (16) at (5.5,1);
    \coordinate (17) at (6,1);
    \coordinate (18) at (6.5,1);
    \coordinate (19) at (7,1);
    \coordinate (20) at (7.5,1);
    \coordinate (21) at (7.7,1.55);
    \coordinate (22) at (8.25,1.7);
    \coordinate (23) at (8.72,1.39);
    \coordinate (24) at (8.79,0.81);
    \coordinate (25) at (8.37,0.42);
    \coordinate (26) at (7.8,0.5);
    \coordinate (27) at (3,1.5);
    \coordinate (28) at (3.5,1.5);
    \coordinate (29) at (4,1.5);
    \coordinate (30) at (4.5,1.5);
    \coordinate (31) at (3.5,0.5);
    \coordinate (32) at (4,0.5);
    \draw (1) -- (2) -- (3) -- (4) -- (5) -- (1);
    \draw (5) -- (6) -- (7) -- (5);
    \draw (5) -- (8) -- (9) -- (5);
    \draw (5) -- (10) -- (11) -- (12) -- (13) -- (14) -- (15) -- (16) -- (17) -- (18) -- (19) -- (20);
    \draw (20) -- (21) -- (22) -- (23) -- (24) -- (25) -- (26) -- (20);
    \draw (12) -- (27) -- (28) -- (29) -- (30) -- (15);
    \draw (12) -- (31) -- (32) -- (15);
    \draw (2,1.1) arc[start angle=0, end angle=180, x radius = 0.5, y radius=0.15];
    \foreach \point in {1,2,3,4,5,6,7,8,9,11,12,13,14,15,16,17,18,19,20,21,22,23,24,25,26,27,28,29,30,31,32} \fill[white, draw=black] (\point) circle (4pt);
    \foreach \point in {10} \fill[blue] (\point) circle (4pt);
    \end{tikzpicture}
    \end{center}
\begin{center}
        \begin{tikzpicture}
        \coordinate (1) at (0,0);
        \coordinate (2) at (0.75,0);
        \coordinate (3) at (1.5,0);
        \coordinate (4) at (2.25,0);
        \coordinate (5) at (3,0);
        \coordinate (6) at (-0.75,0.6);
        \coordinate (7) at (-1.5,0);
        \coordinate (8) at (-1.15,-0.75);
        \coordinate (9) at (-0.35,-0.75);
        \coordinate (10) at (3.75,0.6);
        \coordinate (11) at (4.5,0);
        \coordinate (12) at (4.15,-0.75);
        \coordinate (13) at (3.35,-0.75);
        \coordinate (14) at (0.375,0.5);
        \coordinate (15) at (1.125,0.5);
        \draw (1)--(2)--(3)--(4)--(5);
        \draw (1)--(6)--(7)--(8)--(9)--(1);
        \draw (5)--(10)--(11)--(12)--(13)--(5);
        \draw (1)--(14)--(15)--(3);
        \foreach \point in {1,3,5,6,7,8,9,10,11,12,13,14,15}, \fill[white,draw=black] (\point) circle (3pt);
        \foreach \point in {2} \fill[blue] (\point) circle (3pt);
        \foreach \point in {4}\fill[red] (\point) circle (3pt);
                
        \coordinate (16) at (5,0.75);
        \coordinate (17) at (5,-0.75);
        \coordinate (18) at (5.5,0);
        \coordinate (19) at (6,0);
        \coordinate (20) at (6.5,0);
        \coordinate (21) at (7,0);
        \coordinate (22) at (7.5,0);
        \coordinate (23) at (8,0);
        \coordinate (24) at (8.5,0.75);
        \coordinate (25) at (8.5,-0.75);
        \draw (16)--(17)--(18)--(16);
        \draw (18)--(19)--(20)--(21)--(22)--(23);
        \draw (23)--(24)--(25)--(23);
        \foreach \point in {16,17,18,20,22,23,24,25} \fill[white,draw=black] (\point) circle (3pt);
        \foreach \point in {19} \fill[blue] (\point) circle (3pt);
        \foreach \point in {21} \fill[red] (\point) circle (3pt);
        \draw (7.5,0.1) arc[start angle = 0, end angle = 180, x radius = 0.5, y radius = 0.25];
        \end{tikzpicture}
        \end{center}
        \begin{center}
    \begin{tikzpicture}
    \coordinate (1) at (0,0);
    \coordinate (2) at (0.75,0);
    \coordinate (3) at (1.36,0.44);
    \coordinate (4) at (1.59,1.158);
    \coordinate (5) at (1.36,1.875);
    \coordinate (6) at (0.75,2.31);
    \coordinate (7) at (0,2.31);
    \coordinate (8) at (-0.61,1.875);
    \coordinate (9) at (-0.84,1.158);
    \coordinate (10) at (-0.61,0.44);
    \coordinate (11) at (0.37,1.157);
    \coordinate (12) at (2.59,1.158);
    \coordinate (13) at (2.59,2.158);
    \draw (1) -- (2) -- (3) -- (4) -- (5) -- (6) -- (7) -- (8) -- (9) -- (10) -- (1);
    \draw (1)--(11)--(6);
    \draw (4)--(12)--(13)--(4);
    \foreach \point in {1,2,3,4,6,7,8,9,10,12,13} \fill[white,draw=black] (\point) circle (3pt);
    \foreach \point in {11} \fill[blue] (\point) circle (3pt);
    \foreach \point in {5} \fill[red] (\point) circle (3pt);
    
    \coordinate (14) at (5,0);
    \coordinate (15) at (6,0);
    \coordinate (16) at (6.8,0.6);
    \coordinate (17) at (7.1,1.55);
    \coordinate (18) at (6.8,2.5);
    \coordinate (19) at (6,3.1);
    \coordinate (20) at (5,3.1);
    \coordinate (21) at (4.2,2.5);
    \coordinate (22) at (3.9,1.55);
    \coordinate (23) at (4.2,0.6);
    \coordinate (24) at (4.7,1.55);
    \coordinate (25) at (5.5,1.55);
    \coordinate (26) at (6.3,1.55);
    \coordinate (27) at (5.5,1.05);
    \coordinate (28) at (6,0.75);
    \coordinate (29) at (5,0.75);
    \coordinate (30) at (7.6,1.75);
    \coordinate (31) at (7.6,2.25);
    \coordinate (32) at (7.6,1.35);
    \coordinate (33) at (7.6,0.85);
    \coordinate (34) at (3.75,2.5);
    \coordinate (35) at (4.4,3.2);
    \coordinate (36) at (5,3.4);
    \coordinate (37) at (6,3.4);
    \coordinate (38) at (6.6,3.2);
    \coordinate (39) at (7.3,2.5);
    \draw (14)--(15)--(16)--(17)--(18)--(19)--(20)--(21)--(22)--(23)--(14);
    \draw (22)--(24)--(25)--(26)--(17);
    \draw (25)--(27)--(28)--(29)--(27);
    \draw (17)--(30)--(31)--(17);
    \draw (17)--(32)--(33)--(17);
    \draw (22)--(34)--(35)--(36)--(37)--(38)--(39)--(17);
    \foreach \point in {14,15,16,17,18,19,20,21,22,23,25,27,28,29,30,31,32,33,34,35,36,37,38,39} \fill[white,draw=black] (\point) circle (3pt);
    \foreach \point in {24} \fill[blue] (\point) circle (3pt);
    \foreach \point in {26} \fill[red] (\point) circle (3pt);
    
    \end{tikzpicture}
    \newline
    \newline
    \medskip
    Figure 2. Some graphs with $F^-(G)=1$
    \end{center}

\begin{theorem}
$F^-(G)=1$ if and only if $G$ is $K_4$ or $P_3$ or $G$ has:
\begin{itemize}
    \item Exactly 1 disjoint 1-blocking, no more than 2 non-disjoint 1-blockings that share a vertex of degree 3, and no other $n$-blocking
    \item All degree 2 vertices are in a 1-blocking, an even path with external vertices with higher degree than 2, and even multiple path, or an odd cycle with exactly one vertex of degree higher than 2
    \item There are no pendant even cycles on $G$ and no vertices of degree 1.
    \item Outside of the 1-blocking, no vertex has more than one neighbor of degree 3.
    \item All but one of the exterior vertices of the 1-blockings can have one neighbor that is in an odd cycle or adjacent to another exterior of a 1-blocking through an even path.
\end{itemize}
\end{theorem}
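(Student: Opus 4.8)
The plan is to recast the biconditional as two obligations about singletons and pairs, so that sets of size $\ge 3$ never have to be examined directly. First I would record the monotonicity of skew forcing: if $S\subseteq T$, then the set $\mathrm{cl}(S)$ of vertices eventually colored from $S$ satisfies $\mathrm{cl}(S)\subseteq\mathrm{cl}(T)$. This holds because $\mathrm{cl}(T)$ is skew-closed, and along any forcing sequence from $S$ the current colored set $A$ can be kept inside $\mathrm{cl}(T)$: if a force adds $u$ via a vertex $w$ with $N(w)\setminus A=\{u\}$ and $u\notin\mathrm{cl}(T)$, then $N(w)\setminus\mathrm{cl}(T)=\{u\}$, so $w$ would force $u$ out of $\mathrm{cl}(T)$, contradicting closedness. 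Consequently the family of failed (skew-stalled) sets is downward closed, so $F^-(G)=1$ holds \emph{iff} (i) some single vertex is skew-stalled and (ii) every pair of vertices skew-forces all of $V(G)$. This is exactly the pair of obligations flagged in the introduction, and it reduces the entire theorem to a statement about singletons and pairs.

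For the necessity direction I would assume $G$ is connected with $F^-(G)=1$ and $G\notin\{P_3,K_4\}$, and derive each bullet. The $n$-blocking lemma (Lemma 3.3) together with Lemma 3.7 already gives exactly one disjoint $1$-blocking and no other disjoint $n$-blocking; the midpoint of each $1$-blocking is a stalled singleton, so Lemma 3.4 caps the number of pairwise-nondisjoint $1$-blockings sharing a degree-$3$ vertex at two. For the remaining structural bullets I would argue the contrapositive: each forbidden feature supports a stalled set of size at least $2$, contradicting $F^-(G)=1$. A vertex of degree $1$, a pendant even cycle, or a degree-$2$ vertex lying outside the allowed modules each admits an explicit stalled coloring (color alternate vertices of an even cycle or even path; for a degree-$1$ vertex invoke the cut-vertex lemma, Lemma 3.5), and a vertex outside the $1$-blocking with two degree-$3$ neighbors, or a misattached exterior vertex, likewise seeds a second independent stall. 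Collecting these contradictions yields every bullet.

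For the sufficiency direction I would assume all the bullets and verify (i) and (ii). For (i), the degree-$2$ midpoint $v$ of the unique $1$-blocking is skew-stalled: its two neighbors have degree $>2$ and so retain at least two uncolored neighbors each, and because $G$ has no degree-$1$ vertex, when only $v$ is colored no other vertex has a unique uncolored neighbor. For (ii) I would decompose $G$ into its modules -- the $1$-blocking, the odd cycles attached at a single vertex, the even paths between higher-degree vertices, and the even multiple paths -- and use three building blocks: one colored vertex skew-forces an entire odd cycle; a colored vertex feeding one end of an even path skew-forces across it; and an even multiple path is swept once forcing enters from either side. Starting from any two colored vertices I would then show the forcing front reaches and crosses the $1$-blocking and afterwards sweeps every module, coloring $V(G)$; the bullet forbidding two degree-$3$ neighbors outside the blocking and the attachment bullet are precisely what stop the front from stalling a second time.

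The main obstacle is obligation (ii): the exhaustive verification that \emph{every} pair skew-forces all of $G$. The delicate cases are those in which both seed vertices sit inside structures that individually resemble the stalling configuration -- for instance two vertices buried in distinct odd cycles, or on opposite sides of an even multiple path -- since there one must show the forcing escapes each local pocket and merges into a single sweep. I expect the even multiple path to be the subtlest module, because forcing entering one parallel branch must be shown to propagate through all parallel even branches before the path completes, and it is the evenness of every branch that guarantees the parities align so that no internal vertex stalls. I would isolate this propagation as a separate sub-lemma and then organize the pair analysis by the positions of the two seed vertices relative to the unique $1$-blocking.
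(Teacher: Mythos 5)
Your architecture is essentially the paper's own: necessity is extracted from the same lemmas (the $n$-blocking lemma, the cap on the number of failed singletons, the cut-vertex lemma, and Lemma 3.7), and sufficiency is a sweep of the forcing front through the same modules (odd cycles, even paths between high-degree vertices, even multiple paths, the 1-blocking). Where you genuinely diverge is in how the bound $F^-(G)\le 1$ is discharged. The paper's Cases 1--4 show that every \emph{singleton} other than the degree-2 midpoint of the 1-blocking already forces all of $V(G)$, so the only pair requiring separate treatment is the pair of midpoints of two non-disjoint 1-blockings (its Case 5); the step from that case analysis to ``$F^-(G)\le 1$'' is exactly the monotonicity of the skew closure, which the paper uses silently. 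You instead prove monotonicity explicitly (your argument is correct: the forcing vertex need not be colored, and the closure of the larger set is skew-closed) and then propose to verify all pairs. Making monotonicity explicit is a real gain in rigor --- the paper's concluding sentence does not follow without it --- but you then forgo its main dividend: by your own lemma, any pair containing a vertex whose singleton closure is $V(G)$ is automatically a forcing pair, so the ``delicate'' cases you flag (two seeds buried in distinct odd cycles, seeds on opposite sides of an even multiple path) dissolve into the singleton analysis, and the only pairs that truly need inspection are pairs of failed singletons. Reorganizing the verification around singletons, as the paper does, would eliminate most of the two-dimensional case analysis you anticipate.

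One concrete repair is needed in your necessity sketch: ``color alternate vertices of a pendant even cycle'' does not always produce a failed set. If the attachment vertex $u$ has exactly one neighbor outside the cycle, then with the color class adjacent to $u$ colored, $u$ has a unique uncolored neighbor, forces outward, and the entire graph can end up colored. Color instead $u$ together with the cycle vertices at even distance from $u$: the remaining cycle vertices then have all of their neighbors colored, and no neighbor of theirs can ever acquire a unique uncolored neighbor, so they are never forced and the set (of size at least $2$) fails. More generally, keep in mind that what your contradictions require is a \emph{failed} set rather than a stalled one; several of the sets you would exhibit do admit forces, they just never color everything.
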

\begin{proof}
We know $F^-(G)=1$ for $K_4 $ and $P_3$ by Lemmas 3.1 and 3.2.  Suppose $F^-(G)=1$. By Lemma 3.7, if $F^-(G)=1$, then $G$ contains 1 disjoint 1-blocking and no other disjoint $n$-blocking, satisfying the first criterion. Furthermore, suppose there is a degree 2 vertex $v$ in $V(G)$ that is not in one of the specified structures of the second criterion. Then $v$ would be in a separate $n$-blocking, contradicting the first criterion. $G$ cannot have a pendant even cycle since this contradicts the second criteria. If $G$ has a degree 1 vertex, it will skew force its neighbor. If that neighbor is not an element of a maximum failed skew forcing set, this contradicts that $F^-(G)=1$. If it is, then this vertex is not in a 1-blocking. For completeness, suppose that this vertex is a maximum skew stalling set. Therefore it cannot have a neighbor of degree 2. By Lemma 3.5, this would cause $F^-(G)>1$, contradicting that $F^-(G)=1$. Now suppose a vertex outside the 1-blocking has 2 neighbors of degree higher than 2. Then, this vertex will stall in $S$ but color any neighbors of degree 2. These neighbors are either in an odd cycle or an even path. If they are in an odd cycle, they are all skew forced, but will not be able to spread skew forcing to the rest of the graph, as they only have one vertex of degree higher than 2. If they are on an even path, then skew forcing will stall before the next vertex of degree higher than 2. So, the fourth criterion holds. Finally, if all exterior vertices of 1-blockings have a neighbor that is not in an odd cycle or on an even path connected to another exterior vertex of a 1-blocking. Then if each exterior vertex of the 1-blocking is in $S$, they will skew force along their neighbors of degree 2, but, since these vertices are in neither an odd cycle nor a 1-blocking, they must be on an even path where skew forcing will stall before the next vertex of degree higher than 3 that is not an exterior vertex of the 1-blocking. Therefore, the theorem holds that if $F^-(G)=1$, it must have all of the criteria.

Suppose $G$ satisfies all the conditions. Let $v$ be the first vertex we place in $S$. Begin with $v$ being the vertex of degree 2 in the 1-blocking. Each neighbor has degree higher than 2. Therefore, $S$ is skew stalled and $F^-(G)\geq 1$.
Now, there are 5 cases for the different vertices we can start with for $v$.
\begin{itemize}
\item (Case 1) Suppose $v$ is the vertex on a nonempty collection of odd cycles with its degree higher than 2. This will skew force the whole collection of odd cycles into $S$. Then it will follow one of the following cases:
\begin{itemize}
    \item Case 1(a): Let $w$ be a degree 2 neighbor of $v$ in $S$.
    \begin{itemize}
        \item Case 1(a)i: $w$ is on a $P_{2n}$ where it will continue to skew force along the path until the next vertex with degree higher than 2 is skew forced into $S$. Then refer to case 2.
        \item Case 1(a)ii: $w$ is the vertex of degree 2 in a 1-blocking. It will then skew force its neighbor with degree higher than 3, which is the intersection point of a nonempty collection of odd cycles which will be forced or the beginning of one or more even paths, which leads to case 1(a).
    \end{itemize}
    \item Case 1(b): Let $w$ be a degree 3 neighbor of $v$.
    \begin{itemize}
        \item Case 1(b)i: $w$ is the beginning of one or more even paths. Then skew forcing will continue along each path; this case naturally leads to case 1(a).
        \item Case 1(b)ii: $w$ is a vertex in the 1-blocking with degree higher than 2. The vertex of degree 2 in the 1-blocking will skew force its neighbor which has degree higher than 2. Then, that vertex is the intersection of a nonempty collection of odd cycles which will now be skew forced, or the beginning of one or more even path(s), which leads to case 1(a).
    \end{itemize}
\end{itemize}
\indent Since $G$ does not have any pendant even cycles and no vertex with degree 1, eventually the skew forcing will reach some vertex, $z$, that is the beginning of a pendant. $z$ cannot be cases 1(a) or 1(b) since it then would imply the existence of a degree 1 vertex. Furthermore, $z$ cannot be the vertex in case 2(a) since that would create one or more pendant even cycle(s). So, $z$ is the beginning of a pendant of a nonempty collection of odd cycles. Besides the degree 2 vertex of a 1-blocking, all neighbors of $z$ will be in $S$. Then all degree 2 neighbors of $z$ will skew force their neighbors and $z$ will skew force that vertex in a 1-blocking if they are neighbors. Now, each vertex in $S$ has at most one neighbor outside of $S$, so they will skew force their remaining neighbors. Finally, any degree 3 vertex with an edge incident to a nonempty set of odd cycles will skew force the connecting degree 3 vertex and thus skew force the entire set.
\item (Case 2) Now suppose $v$ is a vertex of degree 2 on a cycle. Skew forcing will continue along the cycle until you hit the only vertex with degree higher than 2 on the cycle. At this point, the reader can refer back to the case where that vertex with degree higher than 2 is $v$.
\item (Case 3) Suppose $v$ is the exterior point of a 1-blocking. Automatically, the other exterior vertex of the 1-blocking is skew forced. If $v$ shares a common exterior of a 1-blocking with that vertex, that exterior vertex will also be skew forced. So, all exterior vertices of 1-blockings are in $S$, causing each even path between them to be skew forced. Since there is one exterior vertex with no neighbors besides vertices in an odd cycle or vertices that are connected via an even path to another exterior of a 1-blocking, that exterior vertex will now skew force the degree 2 vertex of the 1-blocking. If any other exterior vertex is adjacent to an even path or multiple even paths that do not lead to another exterior of a 1-blocking, then it will skew force two consecutive vertices along this path, that will end in a nonempty collection of odd cycles or a multiple path, which would lead back to Cases 1 or 2.
\item (Case 4) Suppose $v$ has degree 2 on an even path. It will then skew force along the path until one of the degree 3 vertices mentioned above is skew forced.
\item (Case 5) Suppose there are two non-disjoint 1-blockings. Let $x$ and $y$ be the degree 2 vertices in these 1-blockings. Then if $S=\{x,y\}$, their common vertex has degree 3 and therefore will skew force another vertex, leading to one of the above cases.
\end{itemize}
Since selecting any vertex other than the degree 2 vertex in a 1-blocking will cause the whole graph to become skew forced, and selecting two degree 2 vertices from a 1-blocking causes the whole graph to become skew forced, we know that $F^-(G)\leq 1$. Therefore, $F^-(G)=1$.
\end{proof}
\indent From this result, we observe the following corollary:
\begin{corollary}
If $F^-(G)<2$, $G$ is planar. 
\end{corollary}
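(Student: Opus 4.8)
The plan is to observe first that $F^-(G)$ is a nonnegative integer, so the hypothesis $F^-(G)<2$ is equivalent to $F^-(G)\in\{0,1\}$. I would then treat the two values separately, in each case invoking the structural characterization already in hand: the characterization of graphs with $F^-(G)=0$ (Theorem 2.1, due to Ansill, Jacob, Penzellna, and Saavedra) and the main characterization Theorem 3.2 for $F^-(G)=1$. In both cases the strategy is identical: decompose $G$ into its blocks (its maximal $2$-connected subgraphs together with the bridge edges) and appeal to the standard fact that a graph is planar if and only if each of its blocks is planar. Since planarity is preserved under identifying two planar graphs at a single cut vertex, it suffices to show that every block of $G$ is planar, which reduces the problem to a finite catalogue of block types.

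For $F^-(G)=0$, the characterization says $G$ is either a nonempty collection of odd cycles sharing a single common vertex, or a doubly extended bouquet-dipole. In the first case every block is a single cycle, so $G$ is a cactus and each block embeds trivially. In the second case the blocks are again odd cycles (the petals at $u$ and at $v$) together with the collections of internally disjoint even paths joining consecutive branch vertices along the connecting path; each such collection of parallel paths between two fixed vertices is a generalized theta graph, which embeds in the plane by drawing the paths as nested parallel arcs. Hence every block is planar and $G$ is planar.

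For $F^-(G)=1$, I would dispose of the two exceptional graphs directly: $K_4$ is its own $3$-connected block and is planar, and $P_3$ is a path. For the remaining graphs I would read off from the five conditions of Theorem 3.2 that each block is one of three types: an odd cycle (the cycles attached at single vertices), the generalized theta graph formed by an even multiple path, or the small structure created by a $1$-blocking (an odd cycle with a chord creating a triangle, or two $1$-blockings meeting at a vertex of degree $3$). Each of these is planar by the same arguments as above, with the bounded-size $1$-blocking configurations checked by inspection, so again every block is planar and therefore $G$ is planar.

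The main obstacle I anticipate is the bookkeeping in the $F^-(G)=1$ case: one must verify carefully, from the list of conditions in Theorem 3.2, that no block can fail to be a cycle, a generalized theta graph, or one of the small $1$-blocking configurations, and in particular that the way odd cycles, even paths, and even multiple paths attach to one another never produces a $3$-connected block other than $K_4$. Once it is established that every block falls into this planar catalogue, the block-decomposition theorem finishes the argument immediately.
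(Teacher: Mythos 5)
Your proposal is correct, and it is worth noting that the paper itself offers no proof at all: the corollary is stated as a bare observation following from Theorem 3.2 (and implicitly Theorem 2.1), so your block-decomposition argument supplies rigor that the paper simply omits. The underlying idea is the same --- read planarity directly off the two structural characterizations --- but your formalization via the standard fact that a graph is planar if and only if each of its blocks is planar is a genuinely cleaner way to organize it, since these graphs are exactly the kind of ``cycles, theta graphs, and small pieces glued at cut vertices'' objects for which that criterion is tailor-made. One small correction to your catalogue in the $F^-(G)=1$ case: the block containing the $1$-blocking need not be an odd cycle with a chord or two $1$-blockings at a degree-$3$ vertex; by the fifth condition of Theorem 3.2 an exterior vertex of the $1$-blocking may be joined to the other exterior vertex by an even path, so the $1$-blocking can sit inside a larger cycle (as in the second row of Figure 2, where the block is a $5$-cycle) or appear as one of the parallel paths of a generalized theta graph. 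This does not harm the argument --- every such block is still a cycle, a cycle with a chord, a generalized theta graph, or $K_4$, all of which are planar --- but it is exactly the bookkeeping you flagged, and it confirms that the verification cannot be skipped by listing only the two configurations you named.
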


\section{Conclusion}
Now that the characterization of graphs with $F^-(G)=0$ and $F^-(G)=1$ have been completed the next logical step is to look at graphs where $F^-(G)=k$ for $k\geq2$. We also proved that a graph with $F^-(G)=1$ can have at most two vertices that can be failed skew zero forcing sets of size 1. An analogous question can be asked. How many different failed skew zero forcing sets of size $t$ exist for a graph with $F^-(G)=t$? 
\newline
\medskip

\textbf{Acknowledgements} This research was supported by the National Science Foundation Research for Undergraduates Award 1950189. 
\printbibliography
\end{document}